\newtheorem{thm}{Theorem}[section]
\newtheorem{lemma}{Lemma}[section]
\title{Fractals Generated by Modifying Aperiodic Substitution Tilings}
\author{}
\begin{document}

\maketitle

\begin{abstract}
This study proposes a method for producing an infinite number of fractals using aperiodic substitution tilings, exemplified by the Ammann Chair tiling. Higher order substitutions of aperiodic tilings are utilized in relation to the Sierpinski carpet concept. The similarity dimensions of the fractals generated by the Ammann Chair tiling are calculated and shown to be dense. A fractal image generator was implemented in the Java programming language and is freely available for public use at \url{https://github.com/KahHengLee/Ammann-Chair-Fractal.git}.

\end{abstract}

\section{Introduction}

A tiling is a plane covering composed of compact sets of tiles that are almost disjoint, in the sense that no two of them have common interior points. On the other hand, a fractal is a complex geometric shape with fine structure at arbitrarily small scales, often exhibiting a degree of self-similarity. Fractals are explained and illustrated in the book Nonlinear Dynamics and Chaos, specifically in chapter 11 \cite{textbook}. This paper aims to connect the concepts of aperiodic tilings and fractals and proposes a method for building fractals from tilings. The research on tilings contributes to various fields, such as art, geometry, and even crystallography.

Our study specifically focuses on the Ammann Chair tiling \cite{Ammann}, which exhibits self-similarity properties that make similarity dimension calculations much simpler. Unlike the well-known aperiodic Penrose tiling, the Ammann Chair tiling does not lack self-similarity properties. The boundary of the Penrose tiling changes with each iteration, making it difficult to determine its similarity dimension.

In this paper, we also reference the well-known plane fractal, the Sierpinski Carpet, which was constructed in 1916 by Waclaw Sierpinski \cite{sierpinski1916courbe}. The fractal's main idea is to remove a certain area (the middle square) in each iteration, eventually forming a complex geometric shape with fine holes at arbitrarily small scales after many iterations.

\begin{figure} [H]
\centering
\includegraphics[scale=0.5]{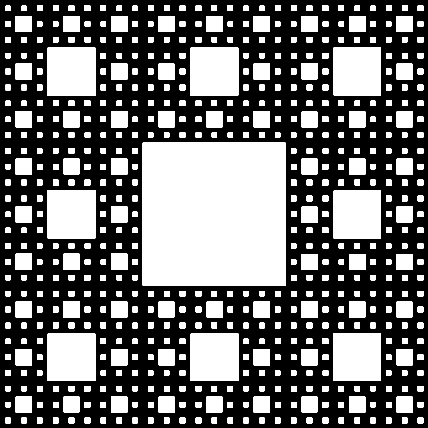}
\text{ }
\includegraphics[scale=0.2]{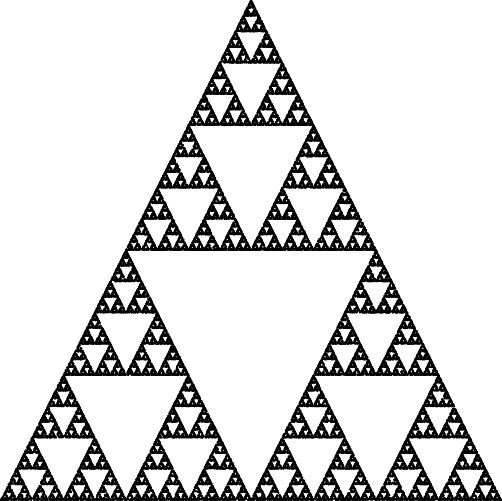}
\caption{Sierpinski Carpet and Sierpinski Triangle.}
\end{figure}

\begin{figure} [H]
    \centering
    \includegraphics[scale=0.09]{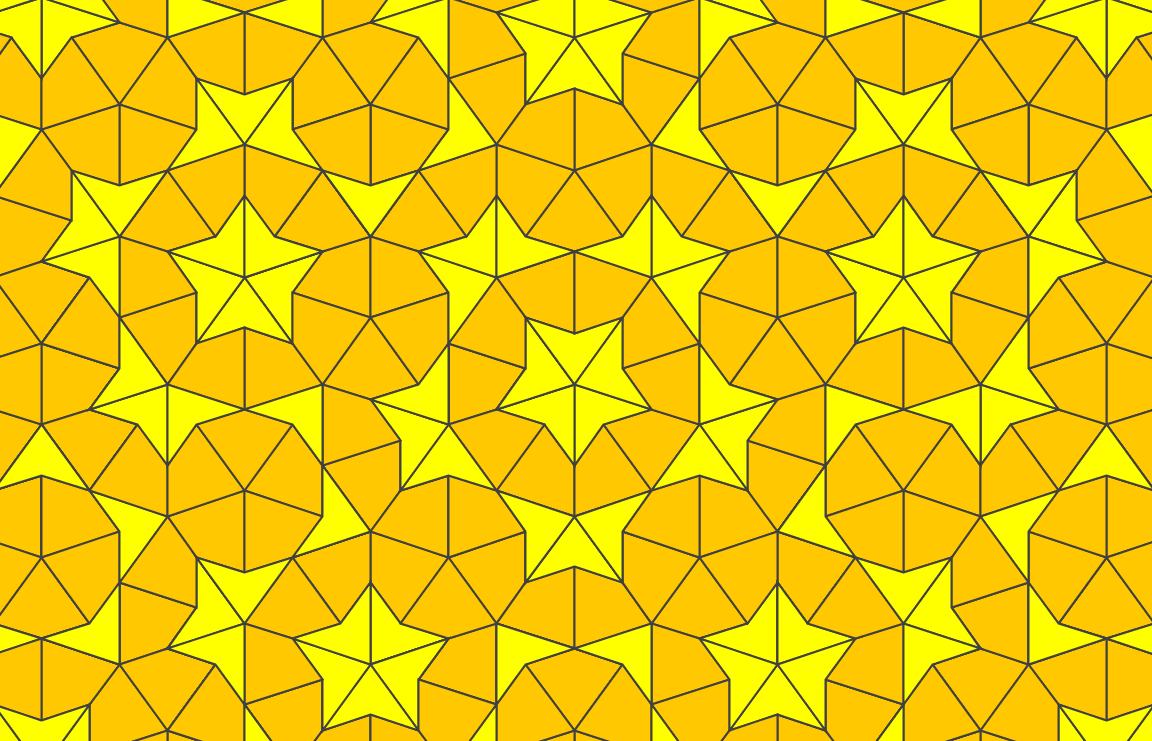}
    \text{ }
    \includegraphics[scale=0.2]{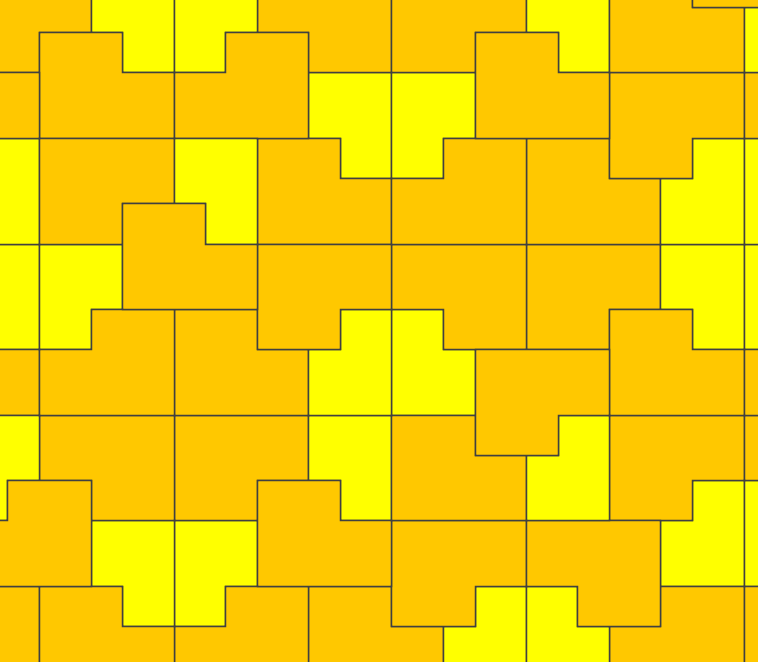}
    \caption{Penrose kite and dart tiling (left), and Ammann Chair tiling (right).}
\end{figure}

\section{Similar Research}
\label{sec:similar research}
Several studies have explored the relationship between fractals and aperiodic tiling, with a particular focus on the Penrose tiling.

Liu's research in physics \cite{Zhengyou_1995} investigates the scaling and scaling-related dynamical properties of the Penrose tiling. Analytically, the Penrose tiling has a fractal dimension of $d_f=2$, which is equivalent to its Euclidean dimension (in $\mathbb{R}^2$). Additionally, the paper provides numerical evidence that the physics equality holds on the Penrose lattice. This equality describes the relationship between spectral dimension, fractal dimension, and diffusive dimension. Liu's research not only provides insights into the physical properties of the Penrose lattice but also analytical calculations that can be used to determine the fractal dimension of the Ammann Chair tiling.

In the publication Fractal Dual Substitution Tilings \cite{Fractaldual}, the authors demonstrate a method for creating an infinite number of new fractal tilings. They achieve this by creating new tilings through graph-iterating function systems, and the resulting tiles have fractal boundaries. Like our paper, this article shows how to create an infinite number of new fractals, but with a focus on modifying the tile boundary rather than plane fractal.

Another article discusses a modification of the Penrose tiling to achieve self-similar properties. This involves changing the shape of the kite and dart (Penrose tiling tiles) from quadrilateral to spiky fractals. The modification includes self-similar properties as well as perfect matching rules \cite{fractalpenrose}. This research provides an option for extending our paper, namely how to replace the Ammann Chair tiling with the Penrose tiling while preserving self-similarity.

\section{Ammann Chair Tiling}
\label{sec:Ammann Chair Tiling}
We can generate the Ammann Chair tiling using an algorithm that replaces existing tiles (old) with new tiles. This method of generating tiling is known as the substitution method.

\begin{figure}[H]
\centering
\includegraphics[scale=0.35]{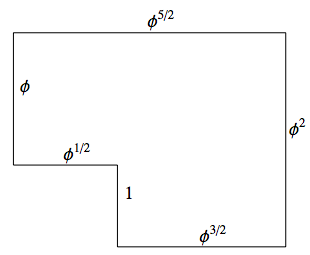}
\caption{Tiles of the Ammann Chair tiling.}
\end{figure}

\subsection{The Typical Substitution Method}
\label{sec:sub}
The Ammann Chair tiling is typically constructed using two sizes of tiles, which we refer to as the ``big tile" (orange) and the ``small tile" (yellow). The size of the big tile is larger than the size of the small tile by a factor of $\phi^{\frac{1}{2}}$, which is determined during the substitution process.

The area of the Ammann Chair tiling increases over generations with the inflation factor of $\phi^\frac{1}{2}$, eventually forming a tiling of the plane \cite{Ammann}. However, for technical reasons, we consider a tiling of the initial tile. By scaling down the size of the tiles over generations, the area of the tiling stays the same. These two approaches are closely related, except for the difference in size. Both of them have the same combinatorial and geometric structures.

\begin{figure}[H]
\centering
\label{fig:substition_rule}
\includegraphics[scale=0.15]{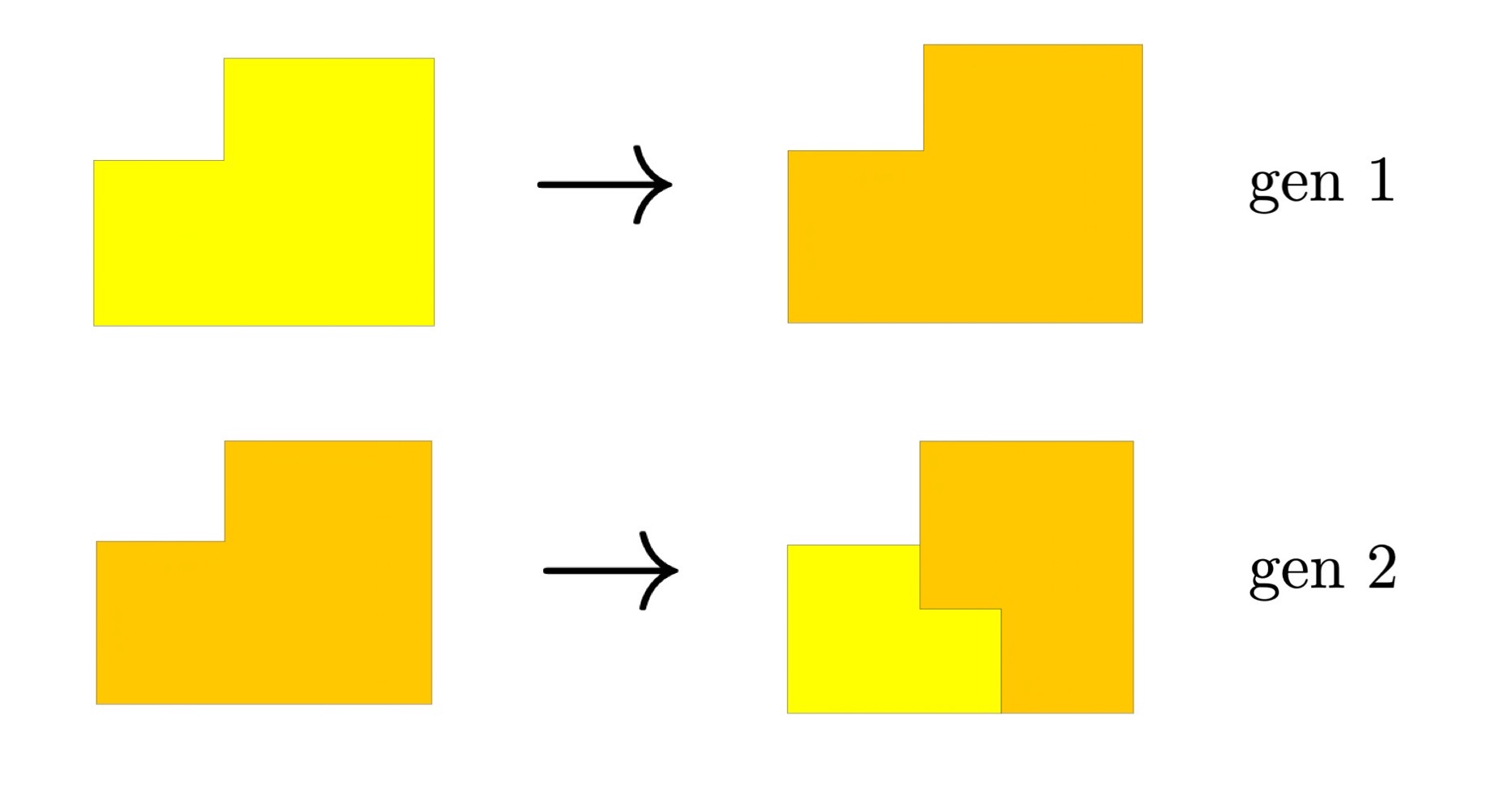}
\caption{The typical substitution algorithm of the Ammann Chair tiling.}
\end{figure}

In subsequent sections, we will refer to the $n$-th generation of the Ammann Chair tiling on the typical substitution by ``Gen $n$".

\begin{figure}[H]
\centering
\includegraphics[scale=0.12]{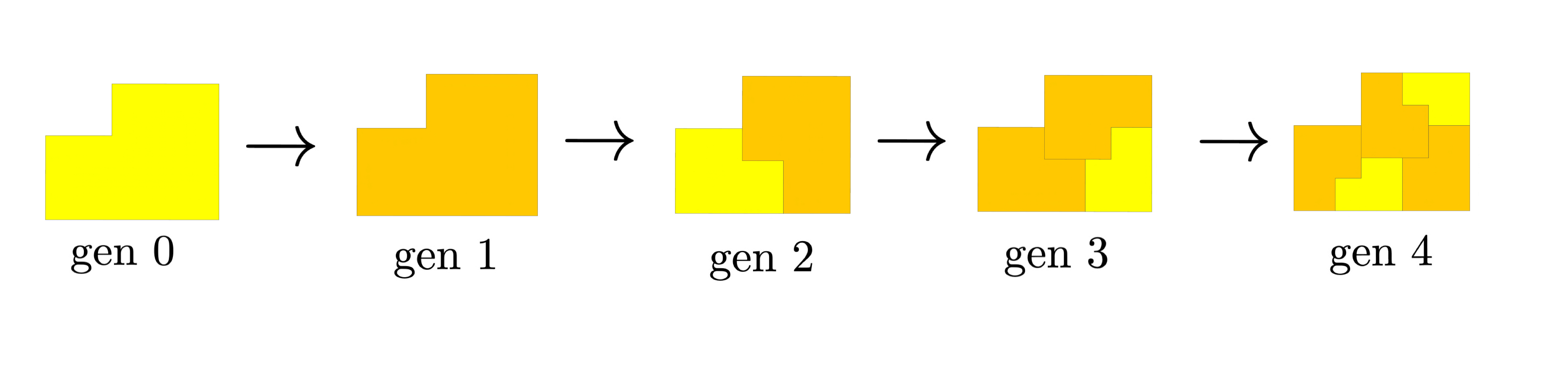}
\caption{Several generations of the Ammann Chair tiling.}
\end{figure}

\subsection{The n-Substitution Method}
Instead of taking one step per iteration, we take two steps, defining this higher-order substitution as 2-substitution. We can easily observe that carrying out 2-steps at once (2-substitution) produces the same tiling as carrying out 1-step at once (1-substitution) using the properties of taking an infinite number of iterations. This concept can be extended to any integer $n$ and is referred to as $n$-substitution.

The $n$-substitution algorithm is easily expressed by considering the transformed (new) tile to be the $n$-th generation of its original (old) tile. Furthermore, by performing $n$ steps at a time, we expect deflation properties to affect the tiles $n$ times in each iteration. As a result, the inflation factor for $n$-substitution is $\phi ^{\frac{n}{2}}$. While generations in n-substitution growth $n$ times faster than typical $1$-substitution. For example, the $k$th generation of $n$-substitution is identical to Gen $n*k$ in $1$-substitution.

\begin{figure}[H]
\centering
\includegraphics[scale=0.15]{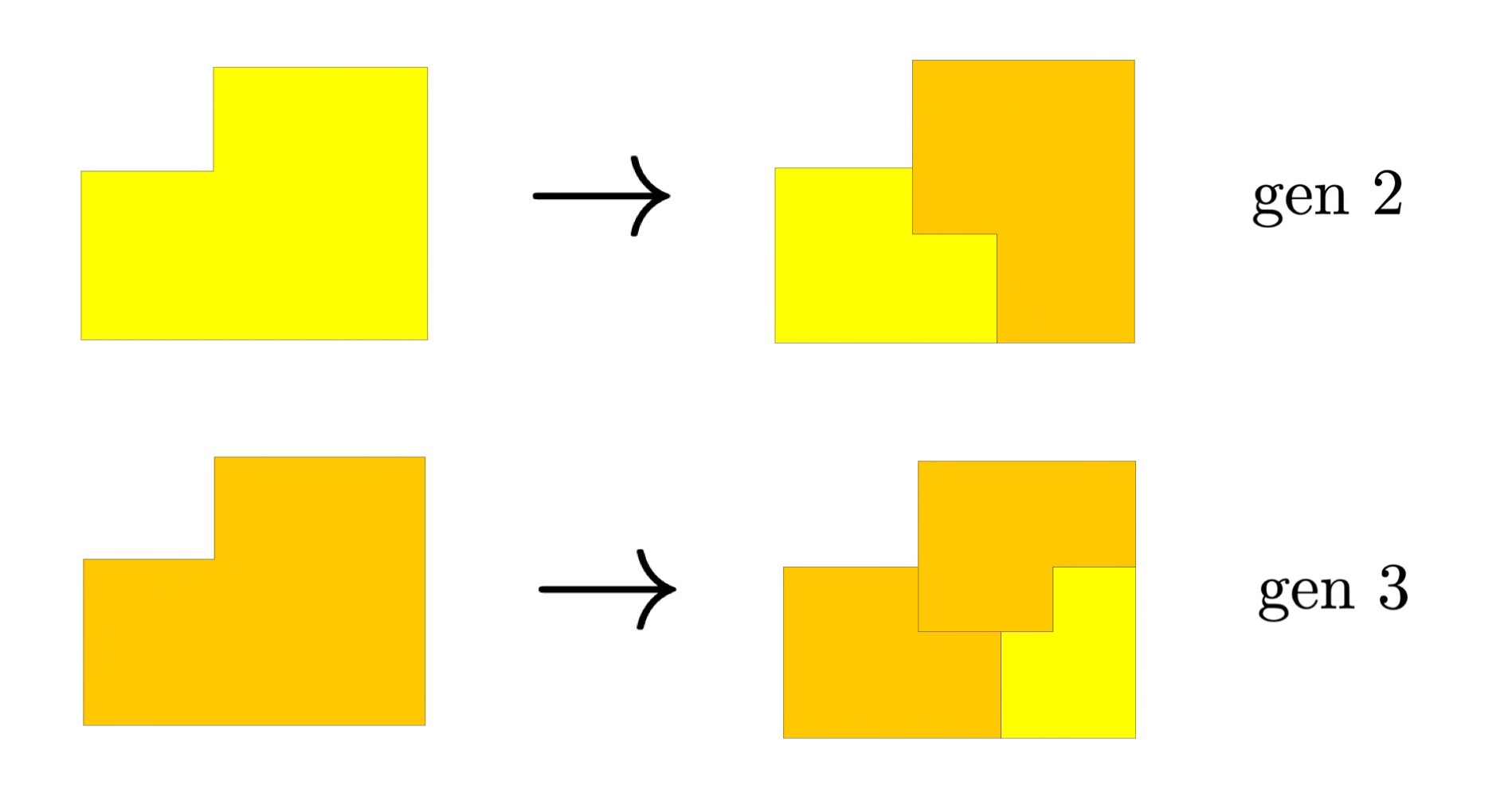}
\caption{The 2-substitution algorithm of the Ammann Chair tiling.}
\end{figure}

\begin{figure}[H]
\centering
\includegraphics[scale=0.15]{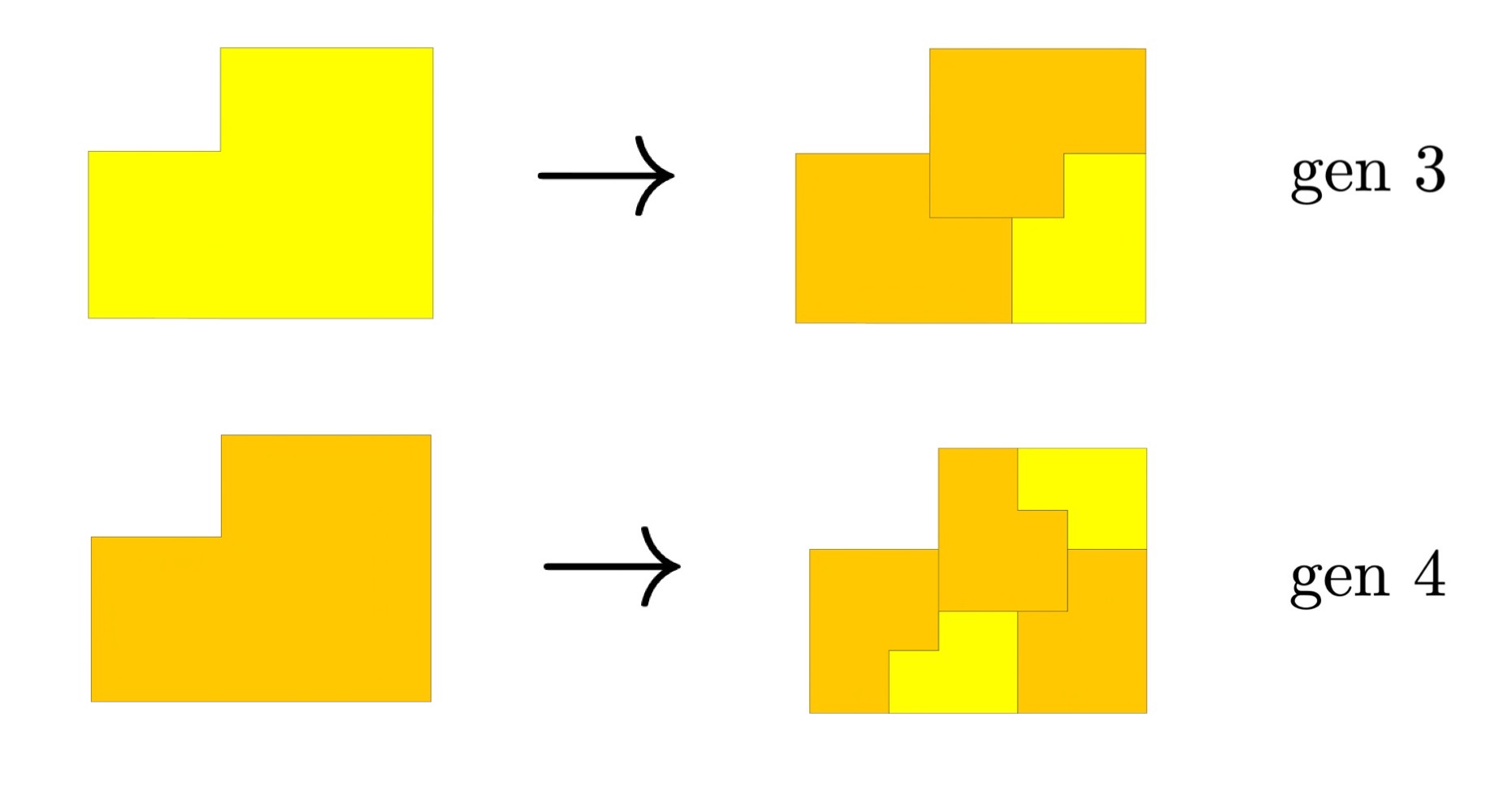}
\caption{The 3-substitution algorithm of the Ammann Chair tiling.}
\end{figure}

\section{Generating a Fractal from Ammann Chair Tiling}
\label{sec:similarity dimension of n-sub}
Up to this point, we have established that the Ammann Chair tiling is an aperiodic tiling in two dimensions and does not exhibit many characteristics of a fractal. In this section, we will draw inspiration from the work of Wacaw Sierpiski and use the Ammann Chair tiling to create a new fractal.

The Ammann Chair tiling becomes increasingly fine with each generation, while the overall shape remains constant. This property is similar to that of a well-known fractal tiling, the Sierpinski Carpet. For more information on how the Sierpinski Carpet works, see the book Nonlinear Dynamics and Chaos \cite{textbook}.

\begin{figure}[H]
\centering
\includegraphics[scale=0.2]{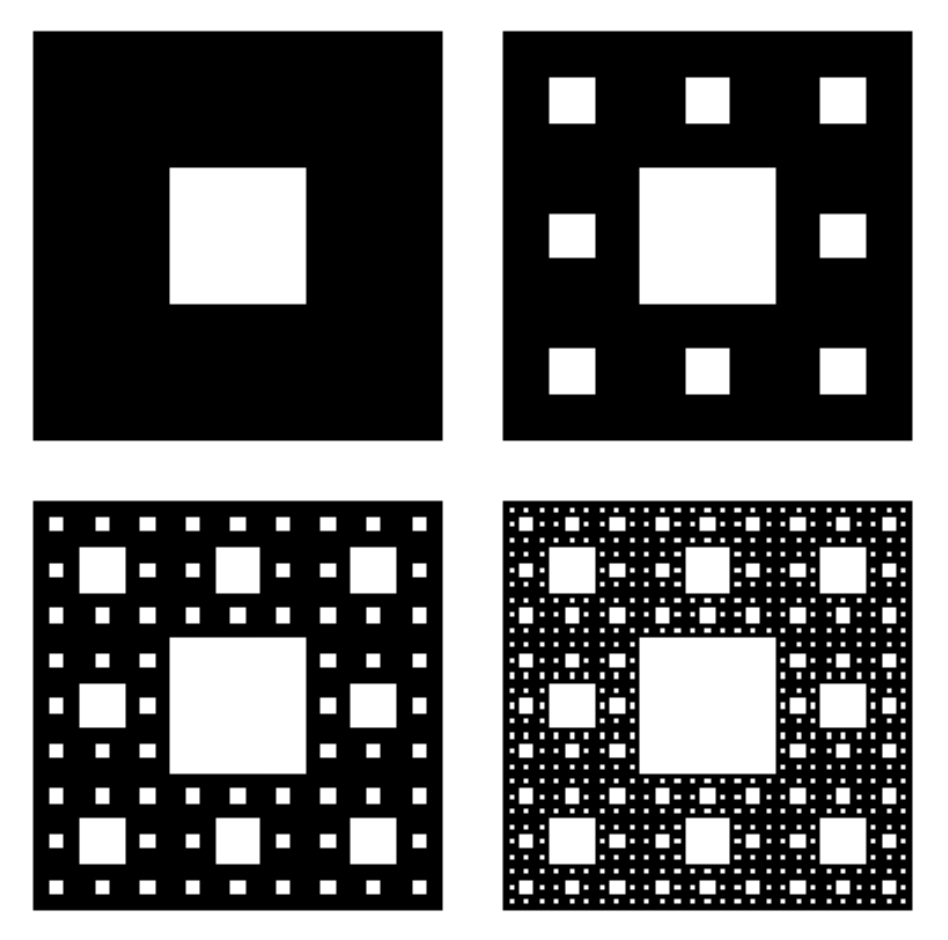}
\caption{Sierpinski Carpet from generation $1$ to generation $4$.}
\end{figure}

\subsection{Counting Tiles in Gen n}
\label{sec:tile counting}
In $n$-substitution, each tile is replaced by multiple tiles per iteration. It turns out that this substitution is related to Gen $n$. After one generation, Gen $n$ replaces a small tile, while Gen $n+1$ replaces a large tile. Therefore, the number of tiles in Gen $n$ (or Gen $n+1$) plays a role in determining the similarity dimension.

Recall that in the typical substitution method ($1$-substitution), Amman Chair tiling does 2 types of replacements. A small tile is replaced by a big tile, whereas a big tile is replaced by a small and a big tile. This can be rewitten as follows:

\begin{align}
1S &\to 1B\\
1B &\to 1S + 1B
\end{align}

Let $S_n$ be the number of small tiles in Gen $n$ and $B_n$ be the number of large tiles in Gen $n$. By simple counting and induction, we can write $S_n$ and $B_n$ as follows:

\begin{align}
S_n &= F_{n-1} S_0+F_{n}B_0\\
B_n &= F_{n} S_0+F_{n+1}B_0,
\end{align}

where $F_n$ refers to the $n$-th term of the Fibonacci sequence.

Consider the typical case of the Ammann Chair tiling, which begins with a single small tile, so $S_0=1$ and $B_0=0$. Then, we have:

\begin{align}
S_n &= F_{n-1}\\
B_n &= F_{n}
\end{align}

\subsection{Removing Tiles}
\label{sec:take away tiles}
The Sierpinski Carpet has a fine square grid with all the squares painted black, but the middle squares are removed from the grid. As the generations progress, the grid becomes finer, and more small middle squares are removed. Using the same logic, we can remove some tiles from the Ammann Chair tiling in each iteration to generate a fractal.

Consider a specific case of $5$-substitution. We remove a large tile from Gen $5$. Then, we use the substitution algorithm shown in Figure \ref{fig:sub5_takeaway} to form the fractal shown in Figure \ref{fig:sub5_fractal}. We name the substitution algorithm as $(5,0,1)$-substitution, and the fractal as $(5,0,1)$-Ammann Chair Fractal. We remove tiles from an earlier generation (Gen 5 instead of Gen 6) to ensure the region being taken away is defined as a complete tile(s) in both generations. At \url{https://github.com/KahHengLee/Ammann-Chair-Fractal.git}, an $(n,a,b)$-Ammann Chair Fractal image generator written in Java is available.

\begin{figure} [H]
    \centering
    \includegraphics[scale=0.35]{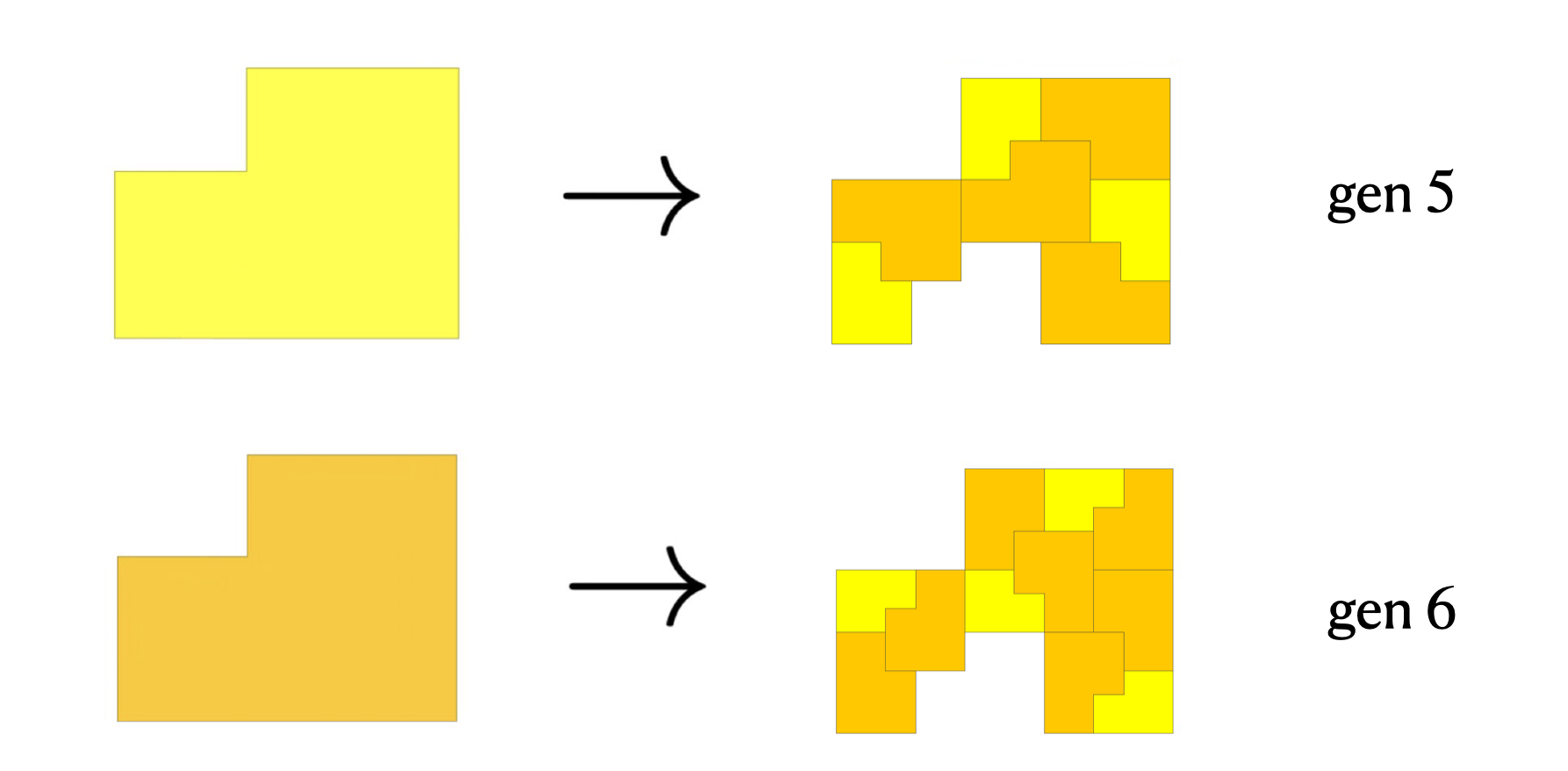}
    \caption{$(5,0,1)$-substitution algorithm.}
    \label{fig:sub5_takeaway}
\end{figure}
\begin{figure} [H]
    \centering
    \includegraphics[scale=0.5]{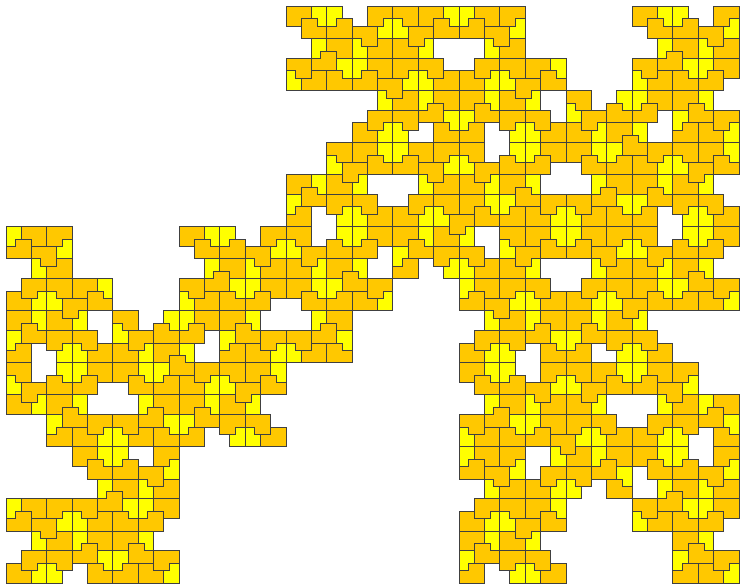}
    \caption{$(5,0,1)$-Ammann Chair fractal.}
    \label{fig:sub5_fractal}
\end{figure}

\section{Similarity Dimension}
Recall that the inflation factor of the $n$-substitution tiling is $r = \phi^{\frac{n}{2}}$, meaning that a tile scaled by $r$ will be the same size as its previous generation. Using the self-similar properties and tile counting from Section \ref{sec:similarity dimension of n-sub}, we can now determine the similarity dimension of the Ammann Chair Fractal. Further reading on the similarity dimension can be found in the book Nonlinear Dynamics and Chaos \cite{textbook}.

For simple illustration, we consider the specific case that appeared in Section \ref{sec:take away tiles}: the $5$-substitution with one big tile removed. 

\vskip 0.3em \noindent \textbf{Calculation.} 
Refer to Figure \ref{fig: 5-sub scaling}, we relates red highlighted tile on the left as a small tile in Gen $5$ and the whole fractal relates to Gen $5$. Scale the small tile region (red highlighted) by $\phi^\frac{5}{2}$, we have $F_5-1=5-1=4$ copies in big tile region and $F_4=3$ in small tile region. We can easily determine its similarity dimension with this property.

\begin{figure}[H]
\label{fig: 5-sub scaling}
\centering
\includegraphics[scale=0.4]{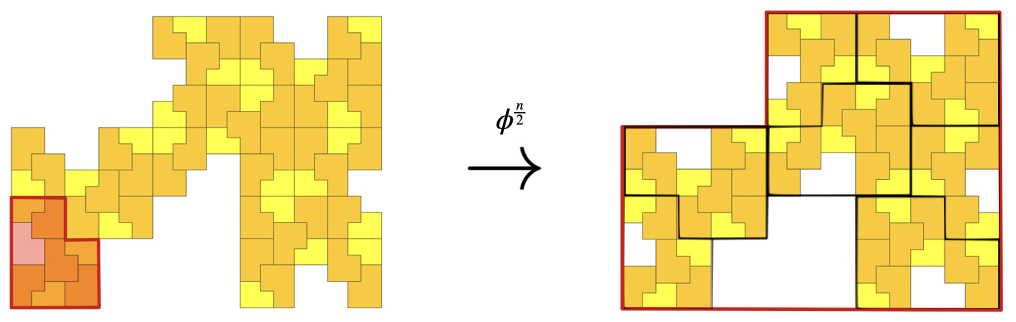}
\caption{Illustration of scaling a small tile in the $4$-substitution.}
\end{figure}

In this context, we are considering the fractal in the region covered by the tiles. When we have a scaled tile, its area has the same dimension as that of the whole fractal. Therefore, when we consider the big tile as a scaled version of the small tile, we need to take into consideration its dimension $d$, multiply by $\phi^\frac{d}{2}$ instead of $\phi$.

Let $A$ be the size of the fractal in the small tile region. Then we have the following equation:
\begin{align}
{(\text{scaling factor } r)}^{(\text{ dimension } d)} &= \text{\#copies}\\
{(\phi^{\frac{n}{2}})}^d &= \text{\#small tiles}+(\phi^\frac{1}{2})^d*\text{\#big tiles}\\
{(\phi^{\frac{n}{2}})}^d &= F_4+\phi^\frac{d}{2}(F_4-1)\\
{(\phi^\frac{5}{2})}^d &= 3+4\phi^\frac{d}{2}\\
\end{align}
Let $x=\phi^\frac{d}{2}$, and solve for $x$ and $d$:
\begin{align}
x^5 - 4x -3 &= 0\\
\phi^\frac{d}{2} = x &\approx 1.5600\\
d &\approx 2 \log_\phi 1.5600\\
d &\approx 1.848
\end{align}
As expected, this fractal has a similarity dimension less than 2, as we removed some regions from a 2D object. It should be noted that this calculation works even if we took a different big tile from the $(5,0,1)$-substitution, since similarity dimension is defined in terms size, which is not affected by a rigid transformation.

\subsection{Generalization of the Similarity Dimension}

The concept presented in the previous section is based on the definition of the similarity dimension and is applicable for a higher-order substitution. This allows us to determine the similarity dimension of all fractals generated with an $n$-substitution.

For any integer $n$, consider an $n$-substitution with the absence of $a$ small tiles and $b$ big tiles. We have

\begin{align}
{(\phi^{\frac{n}{2}})}^d &=\text{number of small tiles}+\sqrt{\phi}^d*\text{number of big tiles}\\
\label{eqn:ori eqn}
{(\phi^{\frac{d}{2}})}^n &=(F_{n-1}-a)+\phi^{\frac{d}{2}}(F_{n}-b)\\
0 &= {(\phi^{\frac{d}{2}})}^n - (F_{n-1}-a)- \phi^{\frac{d}{2}}(F_{n}-b)\\
\text{Let } x=\phi^{\frac{d}{2}}, \\
0 &= x^n - (F_{n}-b)x- (F_{n-1}-a)
\end{align}

We can now determine the similarity dimension by solving for $x$. However, it is an n-degree polynomial, which means it has $n$ solutions. In this situation, we choose a positive real root because $x$ is defined as a positive $\phi$ raised to the power of a non-negative $d$. To ensure that the choice of roots is well-defined, we can prove that the polynomial has only one real positive solution for all possible values of $n$, $a$, and $b$.

\begin{lemma}
\label{lmm:unique solution on p(x)}
For any $n\in \mathbf{N}$, $a \in \mathbf{N} \cap [0,F_{n-1}]$, and $b \in \mathbf{N} \cap [0,F_{n}]$, the polynomial
\begin{equation}
    p(x)=x^n - (F_{n}-b)x- (F_{n-1}-a)
\end{equation}
has only one positive real root.
\end{lemma}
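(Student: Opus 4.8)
The plan is to exploit the very rigid sign pattern of the coefficients. Because $a \in [0, F_{n-1}]$ and $b \in [0, F_{n}]$, both $c := F_{n}-b$ and $d := F_{n-1}-a$ are nonnegative, so I would first rewrite the polynomial as $p(x) = x^{n} - c\,x - d$ with $c,d \ge 0$. The leading coefficient is $+1$ and every remaining coefficient is $\le 0$, so $p$ exhibits exactly one sign change; this is the structural fact driving the whole argument. Descartes' rule of signs then gives ``at most one, hence exactly one'' positive root in one stroke, but I would prefer a self-contained calculus argument so that the degenerate coefficient cases can be controlled explicitly.

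First I would record the boundary behaviour $p(0) = -d \le 0$ and $p(x) \to +\infty$ as $x \to +\infty$, which already yields existence of a positive root by the intermediate value theorem whenever $d>0$. For uniqueness I would examine $p'(x) = n x^{n-1} - c$: when $c>0$ it has the single positive zero $x_\ast = (c/n)^{1/(n-1)}$, so $p$ is strictly decreasing on $(0,x_\ast)$ and strictly increasing on $(x_\ast,\infty)$, while for $c=0$ the function $p$ is strictly increasing on $(0,\infty)$. In every case $p$ is unimodal on the positive axis, starting at $p(0)\le 0$ and rising to $+\infty$, so it can attain the value $0$ at most once for $x>0$; together with existence this pins down exactly one positive root. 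A slicker packaging of the same idea is to set $q(x) := p(x)/x = x^{n-1} - c - d/x$, note $q'(x) = (n-1)x^{n-2} + d/x^{2} > 0$ on $(0,\infty)$ for $n\ge 2$, so $q$ is strictly increasing and has at most one zero, which coincides with the positive roots of $p$.

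The step requiring genuine care, and the only real obstacle, is the collection of boundary cases. I must keep ``positive'' distinct from the root $x=0$: when $d=0$ the constant term vanishes and $x=0$ becomes a root, so I would factor $p(x) = x\,(x^{n-1}-c)$ and argue on the cofactor. The truly degenerate instance is $c=d=0$, that is $a=F_{n-1}$ and $b=F_{n}$, where $p(x)=x^{n}$ has no positive root whatsoever; geometrically this is the case in which every tile has been removed and the fractal is empty, so the hypotheses should be read as excluding it (equivalently, the lemma holds for all $(a,b)\neq(F_{n-1},F_{n})$). I would also dispatch $n=1$ as a trivial linear case separately. Once these edge configurations are handled, the unimodality argument establishes the claim uniformly for all admissible $n$, $a$, and $b$.
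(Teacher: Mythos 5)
Your proof is correct, and its core is the same as the paper's: an elementary shape argument for $p$ on $(0,\infty)$ combined with the boundary value $p(0)\le 0$. The paper differentiates twice and invokes strict convexity ($p''(x)=n(n-1)x^{n-2}>0$ for $x>0$), while you work one derivative lower, locating the single positive critical point $x_\ast=(c/n)^{1/(n-1)}$ and arguing unimodality; your Descartes'-rule remark and the monotone cofactor $q(x)=p(x)/x$ are further repackagings of the same one-sign-change structure, so none of this constitutes a genuinely different route. Where your write-up genuinely improves on the paper is in the degenerate cases, and there you have caught a real defect: as stated, the lemma is false when $a=F_{n-1}$ and $b=F_n$, for then $p(x)=x^n$, which has no positive root at all. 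The paper excludes this ``all tiles removed'' configuration only in the hypotheses of Theorem~\ref{thm:fractal dimension}, not in the lemma itself, and the last step of its own proof (``$p(0)\le 0$, hence exactly one positive real root'') silently fails there: $p(0)\le 0$ does not force a sign change when the constant and linear coefficients both vanish. Likewise $n=1$ must be set aside, as you do: the paper's inequality $p''(x)>0$ is false for $n=1$ (indeed $p''\equiv 0$), and for $(n,a,b)=(1,0,0)$ the polynomial $p$ is identically zero. Modulo these exclusions, which should properly be added to the lemma's hypotheses, both your argument and the paper's are complete and of equal strength.
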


\begin{proof}
Taking the double derivative of $p(x)$, we have:

\begin{align}
p(x) &= x^n - (F_{n}-b)x - (F_{n-1} -a)\\
p'(x) &= nx^{n-1} - (F_{n}-b)\\
p''(x) &= n(n-1) x^{n-2}
\end{align}

For $x>0$, $p''(x)>0$, which shows that $p(x)$ is concave upward for $x>0$. By the range of $a$, we know $p(0)\leq 0$. This proves that $p(x)$ has only one positive real root.
\end{proof}

For more convenient discussion in the later sections, we define the fractal generated by an n-substitution method with Ammann Chair tiles as an $(n,a,b)$-Ammann Chair Fractal, where $a$ represents the number of small tiles taken and $b$ represents the number of big tiles taken. With Lemma \ref{lmm:unique solution on p(x)}, we can determine the similarity dimension of an $(n,a,b)$-A the similarity dimension of $(n,a,b)$-Ammann Chair Fractal.

\begin{thm}
\label{thm:fractal dimension}
For any $n\in \mathbf{N}$, $a \in \mathbf{N} \cap [0,F_{n-1}]$, $b \in \mathbf{N} \cap [0,F_{n}]$, except the case of all tiles were taken ($a=F_{n-1} \wedge b=F_{n}$). An $(n,a,b)$-Ammann Chair Fractal has similarity dimension 
\begin{equation}
\label{eqn:dimension in thm}
    d(n,a,b) = 2* \log _\phi x,
\end{equation}
where $x$ is the unique positive real root of $p(x)=x^n-(F_{n}-b)x-(F_{n-1}-a)$.
\end{thm}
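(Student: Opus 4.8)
The plan is to recognize the $(n,a,b)$-Ammann Chair Fractal as a self-similar set and to read off its similarity dimension from the resulting Moran (scaling) equation, which I will show is exactly $p(x)=0$ after the substitution $x=\phi^{d/2}$; the uniqueness asserted in the statement is then inherited directly from Lemma \ref{lmm:unique solution on p(x)}.

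First I would make the self-similar structure precise. Writing $K_S$ for the fractal carried by a small-tile region and $K_B$ for the fractal carried by a big-tile region, one $n$-substitution step expresses $K_S$ as the union of $(F_{n-1}-a)$ small-tile pieces and $(F_{n}-b)$ big-tile pieces, each obtained from the corresponding unit tile by a similarity of ratio $\phi^{-n/2}$ (the reciprocal of the inflation factor $r=\phi^{n/2}$). The essential geometric input is that a big tile is a small tile scaled by $\phi^{1/2}$, so that $K_B$ is a similar copy of $K_S$ of ratio $\phi^{1/2}$; consequently every big-tile piece is itself a copy of $K_S$ at ratio $\phi^{(1-n)/2}$, and $K_S$ becomes the attractor of a single self-similar iterated function system with $(F_{n-1}-a)$ maps of ratio $\phi^{-n/2}$ and $(F_{n}-b)$ maps of ratio $\phi^{(1-n)/2}$.

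Next I would invoke the definition of similarity dimension. Since the constituent tiles are interior-disjoint, the pieces of the decomposition satisfy the open set condition, so the $d$-dimensional measure is additive over them and the similarity dimension $d$ is the unique exponent balancing the scaling relation. Writing this balance and clearing the factor $\phi^{nd/2}=(\phi^{n/2})^d$ reproduces exactly the equation used in the Calculation, namely $(\phi^{n/2})^d = (F_{n-1}-a) + (\phi^{1/2})^d(F_{n}-b)$. Setting $x=\phi^{d/2}$ turns this into $x^n-(F_{n}-b)x-(F_{n-1}-a)=0$, i.e.\ $p(x)=0$. Because $d\mapsto x=\phi^{d/2}$ is a strictly increasing bijection from $\mathbb{R}$ onto $(0,\infty)$, each positive root corresponds to exactly one real $d$; Lemma \ref{lmm:unique solution on p(x)} supplies a unique positive root $x^{\ast}$, so $d(n,a,b)=2\log_\phi x^{\ast}$ is well-defined, which is \eqref{eqn:dimension in thm}. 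The excluded case $a=F_{n-1}\wedge b=F_{n}$ is precisely the one in which every tile is removed: there $p(x)=x^n$ has no positive root and the fractal is empty, so no dimension is assigned; in all remaining cases $p(0)=-(F_{n-1}-a)\le 0$ while $p(x)\to+\infty$, which already forces a positive root whose uniqueness is the content of the lemma.

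The main obstacle I anticipate is the rigorous justification of the self-similar reduction in the first two steps, rather than the algebra that follows. Specifically, one must verify that $K_B$ really is a rigidly-placed similar copy of $K_S$ of ratio $\phi^{1/2}$ (so that $\mathcal{H}^d(K_B)=\phi^{d/2}\,\mathcal{H}^d(K_S)$), which rests on the removal rule being applied in a scale-consistent way across the two tile sizes, and that the decomposition genuinely satisfies the open set condition so that the $d$-dimensional measure is additive and the Moran equation is valid. Once these structural facts are in place, the remaining steps are routine and the theorem follows immediately from Lemma \ref{lmm:unique solution on p(x)}.
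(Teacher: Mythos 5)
Your proposal is correct and takes essentially the same approach as the paper: both derive the Moran scaling equation $(\phi^{n/2})^d = (F_{n-1}-a) + \phi^{d/2}(F_n-b)$ from the tile counts and the inflation factor, substitute $x=\phi^{d/2}$ to obtain $p(x)=0$, and appeal to Lemma \ref{lmm:unique solution on p(x)} for the unique positive root. Your extra care about the IFS structure, the open set condition, and the excluded case $a=F_{n-1}\wedge b=F_n$ (where $p(x)=x^n$ has no positive root) is more rigorous than the paper's informal derivation, but the underlying argument is identical.
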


\section{Distribution of Fractal Dimensions}

In the previous section, we obtained various fractals by selecting different values for $n$, $a$, and $b$ in the $(n,a,b)$-Ammann Chair Fractal. It is natural to wonder if it is possible to generate a fractal with any desired dimension. If this is the case, then the fractals would be more flexible and applicable to a wider range of mathematical and even real-world problems.

To answer this question, we first need to observe the injective relationship between the $(n,a,b)$-Ammann Chair Fractal and its similarity dimension. In set theory, we say that the cardinality of the domain is larger than the cardinality of the codomain.

As we know, the number of tiles in Gen $n$ is just $F_n+F_{n-1}$. This implies that the set of all possible fractals generated with n-substitution is a finite set. Considering that $n$ can be any natural number, we have a countably infinite number of $n, a, b$-Ammann Chair Fractals. This result corresponds to the fact that the cardinality of both the domain and the codomain is countable. Therefore, it is not an interval and does not contain any interval.

\subsection{Distribution of Fractal Dimension for Fixed n}
\label{sec:dimension for fixed n}

Since a fractal generated from 2D tiles, we might intuitively think that its similarity dimension will fall between 1 and 2. For the upper bound, taking away tiles in the $n$-substitution only decreases the dimension, which implies that the similarity dimension has an upper bound equal to 2. Surprisingly, for the lower bound, we find that it is not valued at 1, but instead is lower bounded by 0.

From Theorem \ref{thm:fractal dimension}, $d(n,a,b)$ is expressed as a function, where for any natural number $n$, $a\leq F_{n-1}$, $b\leq F_{n}$, and except the case of all tiles were taken ($a=F_{n-1} \wedge b= F_{n}$). Considering a fixed value of $n$, we can plot the similarity dimension against the number of tiles taken. The graph shows the trend of the similarity dimension as more tiles are taken away. For the sake of comparison, the x-axis is scaled as the ratio of the number of taken tiles to the total number of tiles. Consider a specific case where $n=10$, and the tiles are taken in order from all available small tiles to big tiles.

\begin{figure} [H]
\centering
\label{graph:fractal dimension of 10-sub}
\includegraphics[scale=0.6]{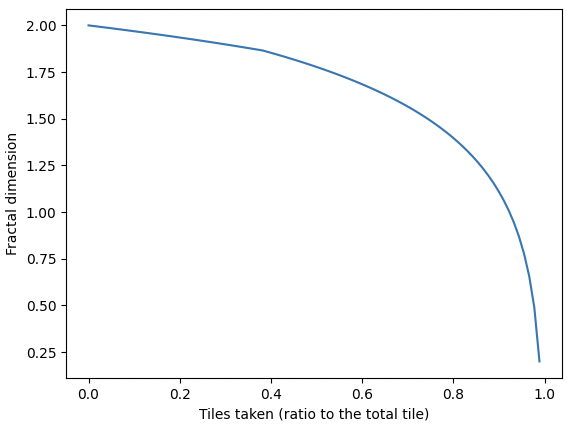}
\caption{Illustration of scaling a small tile in the $4$-substitution.}
\end{figure}

Figure \ref{graph:fractal dimension of 10-sub} shows a significant downward trend, as the similarity dimension strictly decreases as more tiles are taken away. This matches our reasoning, as there are fewer tiles in the $n$-substitution, resulting in a more dramatic decrease in area after each substitution. The remarkable thing about this graph is that the rate of decrease is getting faster and faster along the x-axis. This is equivalent to saying that the function has a negative strictly decreasing derivative.

\subsection{Density of Fractal Dimension}

Consider any $(n,a,b)$-Ammann Chair Fractal. The presence of a single tile has little effect on its similarity dimension since a tile occupies only a small area in Gen $n$, particularly in cases with large $n$. This observation suggests that when $n$ is sufficiently large, the difference in similarity dimensions between adjacent
values of $a$ and $b$ becomes small, even forming a dense set in the interval $[0,2]$. By introducing a lemma, we can determine the density of dimensions of the Ammann Chair Fractal.

\begin{lemma}
\label{lmm:d=d'}
For all $(n,a,b)$-Ammann Chair Fractals with dimension $d$, for all $k\in \mathbb{N}$, there always exists an $(n+k,a',b')$-Ammann Chair Fractal with the same dimension $d$, where
\begin{equation}
\begin{bmatrix}
    a' \\
    b' \\
\end{bmatrix}
=
\begin{bmatrix}
    0 & 1 \\
    1 & 1 \\
\end{bmatrix}^k
\begin{bmatrix}
    a\\
    b\\
\end{bmatrix}
\end{equation}.
\end{lemma}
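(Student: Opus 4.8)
The plan is to prove the lemma by showing that the $(n+k,a',b')$-Ammann Chair Fractal and the $(n,a,b)$-Ammann Chair Fractal describe the same self-similar set, so that they automatically share the similarity dimension $d$ supplied by Theorem \ref{thm:fractal dimension}. The bridge between the two descriptions is the substitution matrix
$$M = \begin{bmatrix} 0 & 1 \\ 1 & 1 \end{bmatrix},$$
which, by the replacement rules $1S \to 1B$ and $1B \to 1S + 1B$, sends the count vector of one generation to that of the next. Since an $(n+k)$-substitution is, at the level of single steps, an $n$-substitution followed by $k$ more single substitutions, the factor $M^k$ appearing in the statement is exactly what records those $k$ extra steps.

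First I would establish the closed form $M^k = \begin{bmatrix} F_{k-1} & F_k \\ F_k & F_{k+1} \end{bmatrix}$ by induction, so that the asserted relation reads $a' = F_{k-1}a + F_k b$ and $b' = F_k a + F_{k+1}b$. Next I would track the removed region: if a removed small tile or a removed big tile at Gen $n$ were instead substituted $k$ further times, it would expand into $F_{k-1}$ small and $F_k$ big tiles (respectively $F_k$ small and $F_{k+1}$ big tiles) of Gen $n+k$ occupying the identical planar region. Summing over the $a$ removed small and $b$ removed big tiles yields precisely $(a',b')^{\mathsf{T}} = M^k (a,b)^{\mathsf{T}}$, so deleting $a'$ small and $b'$ big tiles at Gen $n+k$ removes the same region as deleting $a$ small and $b$ big tiles at Gen $n$. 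The same bookkeeping can be phrased purely in counts: by the Fibonacci addition identities $F_{n+k-1} = F_{k-1}F_{n-1} + F_k F_n$ and $F_{n+k} = F_k F_{n-1} + F_{k+1}F_n$, the total-tile vector $(F_{n-1},F_n)^{\mathsf{T}}$ maps under $M^k$ to $(F_{n+k-1},F_{n+k})^{\mathsf{T}}$, the total tiles of Gen $n+k$, so the remaining-tile vector also transforms by $M^k$. This simultaneously confirms that the excluded all-tiles-removed case maps to the excluded case and that the admissible ranges of $a'$ and $b'$ are respected.

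Once the two constructions are identified as the same set, the desired equality $d(n+k,a',b') = d(n,a,b)$ is immediate, since the similarity dimension depends only on the set. As a more self-contained alternative, I would instead try to verify directly that the positive root $x$ of $p_{n,a,b}$ is also a root of $p_{n+k,a',b'}$, starting from $x^n = (F_n-b)x + (F_{n-1}-a)$ and using the Fibonacci recursion to rewrite $x^{n+k}$ in terms of $x$ and $1$.

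I expect the main obstacle to be exactly the step that forces the two fractals to coincide. Matching the removed region at a single generation is not by itself enough, because the $(n,a,b)$-construction reapplies its removal every $n$ single steps while the $(n+k,a',b')$-construction reapplies it every $n+k$ steps; although the top-level remaining tiles agree (the remaining-count vector genuinely transforms by $M^k$), I must argue with care that the two self-similar recursions generate the identical set at all scales and not merely at the leading level. Controlling this interaction between the two differing substitution periods — equivalently, checking whether the needed algebraic identity in the root-based approach truly survives beyond the leading term — is where the real difficulty concentrates, and it is the step I would scrutinize most before claiming the proof is complete.
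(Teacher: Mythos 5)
Your final paragraph is the most important part of your proposal, and you should trust it: the obstacle you flag is not merely the hard step, it is fatal. Neither the ``same set'' identification nor the algebraic fallback can be completed, because Lemma \ref{lmm:d=d'}, with the dimension defined as in Theorem \ref{thm:fractal dimension} by the positive root of the associated polynomial, is false outside degenerate cases. Your first-level bookkeeping is correct: the $a$ small and $b$ big tiles removed at Gen $n$ occupy exactly the region of the $a'=F_{k-1}a+F_kb$ small and $b'=F_ka+F_{k+1}b$ big tiles of Gen $n+k$. But after that the two constructions part ways --- the $(n,a,b)$-substitution removes again inside every surviving tile at depths $2n,3n,\dots$, while the $(n+k,a',b')$-substitution removes only at depths $2(n+k),3(n+k),\dots$ --- so the limit sets differ, and the root computation detects this. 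Writing $p_{n,a,b}(x)=x^n-(F_n-b)x-(F_{n-1}-a)$ and $q_m(x)=x^m-F_mx-F_{m-1}$, the two Fibonacci addition identities you quote give exactly
\begin{equation*}
p_{n+k,a',b'}(x)=x^k\,p_{n,a,b}(x)+(F_{n-1}-a)\,q_k(x)+(F_n-b)\,q_{k+1}(x).
\end{equation*}
Here $q_1\equiv 0$, while each $q_m$ with $m\ge 2$ is convex on $(0,\infty)$, equals $-F_{m-1}<0$ at $x=0$, and vanishes at $x=\phi$, hence is strictly negative on $(0,\phi)$. Moreover, if at least one tile is removed, then $p_{n,a,b}(\phi)=a+b\phi>0$, so the unique positive root $x_0$ of $p_{n,a,b}$ lies strictly below $\phi$. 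Evaluating the identity at $x_0$ therefore gives $p_{n+k,a',b'}(x_0)<0$, so the positive root of $p_{n+k,a',b'}$, and with it the dimension, is \emph{strictly larger} --- in every case except $a=b=0$ (the trivial case $d=2$) or $k=1$ together with $b=F_n$. The paper's own example is already a counterexample: for $(n,a,b)=(5,0,1)$ and $k=1$ the matrix gives $(a',b')=(1,1)$; the root of $p_{5,0,1}(x)=x^5-4x-3$ is $x_0\approx 1.5600$, giving $d\approx 1.848$, while the root of $p_{6,1,1}(x)=x^6-7x-4$ is approximately $1.5703$, giving $d\approx 1.876$.

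You should also know how this compares with the paper: the paper does not prove the lemma either. Its entire justification is one sentence declaring the lemma trivial from the tile-counting relations, which is precisely the first-level region matching you carried out, with the mismatch of removal periods never mentioned; it conflates the first iteration of the two substitutions (which do agree as regions) with the two fractals (which do not). So the correct outcome of the scrutiny you promised is not a cleverer identification of the two sets, but the conclusion that the statement itself needs repair --- and this is not an isolated blemish, since the paper's density theorem is proved by perturbing the $(n+k,a',b')$-fractals supplied by this lemma, so its proof inherits the gap. The only salvageable fragments are the cases identified above ($a=b=0$, and $k=1$ with $b=F_n$), which are exactly the cases where the root transfer you hoped for actually occurs.
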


Lemma \ref{lmm:d=d'} is trivial from Equation \ref{sec:tile counting}, it shows that for any $(n,a,b)$-Ammann Chair Fractal, there always exist fractals with a higher order of (n+k)-substitution that share the same dimension. Notice that the tiles are finer for higher values of $n$, and the absence of a tile has less effect on its dimension.

\begin{thm}
The set of similarity dimensions for $(n,a,b)$-Ammann Chair Fractals is dense in $[0,2]$.
\end{thm}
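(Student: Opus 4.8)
The plan is to reduce the claim to a density statement about the roots $x$ supplied by Theorem \ref{thm:fractal dimension}, and then to exhibit an explicit one-parameter subfamily of fractals whose roots form an arbitrarily fine net. Since $d(n,a,b) = 2\log_\phi x$ and the map $x \mapsto 2\log_\phi x$ is a continuous, strictly increasing bijection of $[1,\phi]$ onto $[0,2]$ (from $x=\phi^{d/2}$, the value $d=0$ corresponds to $x=1$ and $d=2$ to $x=\phi$), it suffices to prove that the set of positive real roots $x$ arising from admissible triples $(n,a,b)$ is dense in $[1,\phi]$; density on the $x$-side transfers to density on the $d$-side under a monotone homeomorphism.

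First I would isolate the subfamily obtained by removing every big tile, i.e. taking $b=F_n$ while leaving $a' := F_{n-1}-a$ small tiles, with $a' \in \{1,2,\dots,F_{n-1}\}$ (the excluded value $a'=0$ is exactly the forbidden ``all tiles removed'' case). For such triples the polynomial of Theorem \ref{thm:fractal dimension} collapses to $p(x)=x^n-a'$, whose unique positive root (guaranteed by Lemma \ref{lmm:unique solution on p(x)}) is simply $x_{n,a'}=(a')^{1/n}$. As $a'$ ranges over $1,\dots,F_{n-1}$ this yields the finite set $\{1^{1/n},2^{1/n},\dots,F_{n-1}^{1/n}\}$ contained in $[1,\,F_{n-1}^{1/n}]$, and I would show that for large $n$ this set is a fine net covering almost all of $[1,\phi]$.

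Two quantitative estimates drive the argument. For the mesh, differentiating $y \mapsto y^{1/n}$ gives derivative $\tfrac1n y^{1/n-1} \le \tfrac1n$ for all $y\ge 1$, so consecutive roots satisfy $(a'+1)^{1/n}-(a')^{1/n}\le \tfrac1n$, making the net $\tfrac1n$-dense in $[1,\,F_{n-1}^{1/n}]$. For the reach, the Fibonacci asymptotic $F_{n-1}\sim \phi^{\,n-1}/\sqrt{5}$ gives $\tfrac1n\log F_{n-1}\to \log\phi$, hence $F_{n-1}^{1/n}\to \phi$ as $n\to\infty$. Combining these, for any target $x^{*}\in[1,\phi)$ and any $\varepsilon>0$ I can choose $n$ large enough that $x^{*}\le F_{n-1}^{1/n}$ and $\tfrac1n<\varepsilon$, whereupon some $(a')^{1/n}$ lies within $\varepsilon$ of $x^{*}$; the right endpoint $x=\phi$ is attained exactly by the no-removal fractal $(n,0,0)$, since the identity $\phi^{\,n}=F_n\phi+F_{n-1}$ shows $\phi$ is the positive root of $x^n-F_n x-F_{n-1}$. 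This establishes density in $[1,\phi]$, and therefore density of $\{d(n,a,b)\}$ in $[0,2]$.

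The main obstacle is not any single hard step but making the two limits interact cleanly: I must ensure the mesh bound $\tfrac1n$ holds uniformly along the entire net (which is why restricting to $y\ge 1$, i.e. roots in $[1,\phi]$, is essential) and that the top of the net genuinely approaches $\phi$ rather than stalling below it. The subfamily $b=F_n$ is chosen precisely so that both quantities are available in closed form. A more ambitious route through the general polynomial would instead bound the sensitivity of the root via $\partial x/\partial a' = 1/p'(x) \le \frac{1}{(n-1)x^{n-1}}$ (using $x^{n-1}\ge F_n-b$ at the root), which also forces the gaps between dimensions of adjacent admissible triples to $0$; this confirms the paper's heuristic that individual tiles matter less as $n$ grows, but it introduces avoidable case analysis, so I would present the clean $b=F_n$ construction as the proof and relegate the sensitivity estimate to a remark.
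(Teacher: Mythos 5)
Your proposal is correct, but it takes a genuinely different route from the paper's. The paper proves density via Lemma \ref{lmm:d=d'}: it lifts a given $(n,a,b)$-fractal of dimension $d$ to an $(n+k,a',b')$-fractal of the same dimension, perturbs that configuration by a single tile, and argues via continuity of polynomial roots (citing Artin) that the perturbed dimension $d'$ tends to $d$ as $k\to\infty$; density in $[0,2]$ is then inferred from the idea that single-tile steps sweep from $d=2$ (nothing removed) down to dimensions near $0$ (all but one tile removed) with increments that vanish as the substitution order grows. Your argument bypasses Lemma \ref{lmm:d=d'} and root-continuity entirely: restricting to the subfamily $b=F_n$ collapses the polynomial of Theorem \ref{thm:fractal dimension} to $x^n-a'$ with explicit root $(a')^{1/n}$, and density follows from two elementary estimates --- the mesh bound $(a'+1)^{1/n}-(a')^{1/n}\le \tfrac1n$ (valid precisely because the roots lie in $[1,\phi]$, so the derivative bound applies) and the reach $F_{n-1}^{1/n}\to\phi$ --- transferred to dimensions through the monotone homeomorphism $x\mapsto 2\log_\phi x$ of $[1,\phi]$ onto $[0,2]$. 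What your approach buys is a self-contained, fully quantitative proof with an explicit $O(1/n)$ mesh, using Theorem \ref{thm:fractal dimension} only as a black box; indeed it is arguably more airtight than the paper's, whose limit argument establishes that dimensions accumulate near each achievable $d$ but leaves implicit the step of covering all of $[0,2]$ by chains of single-tile modifications. What it gives up is the paper's stronger structural point --- that a one-tile perturbation of an \emph{arbitrary} configuration, not just those with $b=F_n$, has vanishing effect on the dimension for large order --- which is exactly the content of the sensitivity estimate you relegate to a remark.
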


\begin{proof}
Consider an $(n,a,b)$-Ammann Chair Fractal with dimension $d$. By Equation \ref{eqn:ori eqn}, we have

\begin{align}
\left(\phi^{\frac{n}{2}}\right)^d=\phi^{\frac{d}{2}}\left(F_n-b\right)+\left(F_{n-1}-a\right)
\end{align}

By Lemma \ref{lmm:d=d'}, $\forall k, \exists a', b'$ such that $(n+k,a',b')$-Ammann Chair Fractal has dimension $d$, we have

\begin{align}
\begin{aligned}
\left(\phi^{\frac{a+k}{2}}\right)^d&=\phi^{\frac{d}{2}}\left(F_{n+k}-b'\right)+\left(F_{n+k-1}-a'\right) \\
\left(\phi^{\frac{n}{2}}\right)^d&=\left(\phi^{\frac{d}{2}}\right)^{-k}\left[\phi^{\frac{d}{2}}\left(F_{n+k}-b'\right)+\left(F_{n k-1}-a'\right)\right]
\label{eqn:(1)}
\end{aligned}
\end{align}

Without loss of generality, we modify the $(n+k,a',b')$-Ammann Chair Fractal by adding (or removing) a small (or big) tile. Let $d'$ be its dimension after modification, we can write

\begin{align}
\begin{gathered}
\left(\phi^{\frac{n+k}{2}}\right)^{d'}=\phi^{\frac{d'}{2}}\left(F_{n+k}-b'\right)+\left(F_{n+k-1}-a'\right)+1 \\
\left(\phi^{\frac{n}{2}}\right)^{d'}=\left(\phi^{\frac{d'}{2}}\right)^{-k}\left[\phi^{\frac{d'}{2}}\left(F_{n+k}-b'\right)+\left(F_{n+k-1}-a'\right)\right]+\left(\phi^{\frac{d'}{2}}\right)^{-k}.
\end{gathered}
\end{align}

As $k\rightarrow \infty$, $\left(\phi^{\frac{d'}{2}}\right)^{-k} \rightarrow 0$. By taking limit of $k$ goes to infinity, we have

\begin{align}
\lim _{k \rightarrow \infty}\left(\phi^{\frac{n}{2}}\right)^{d'}=\lim _{k \rightarrow \infty}\left(\phi^{\frac{d'}{2}}\right)^{-k}\left[\phi^{\frac{d'}{2}}\left(F_{n+k}-b'\right)+\left(F_{n+k-1}-a'\right)\right]
\label{eqn:(2)}
\end{align}

By treating the equations as polynomials of degree $n$, Equation \ref{eqn:(2)} converges to Equation \ref{eqn:(1)} in terms of coefficients. With Proposition 5.2.1 from the textbook ``Algebra" by Artin \cite{artin}, we conclude that $d'$ converges to $d$. Therefore, the collection of all similarity dimensions for the $(n,a,b)$-Ammann Chair Fractal is dense in $[0,2]$.
\end{proof}

\section{Future Research}
In this paper, we have analyzed the Ammann Chair tiling as an example of an aperiodic tiling. The new generation tiles were scaled to the same size as their predecessors, and their similarity dimension was calculated. However, this method does not allow for the analysis of non-fixed-boundary tilings, such as the Penrose tiling, which have a spiky boundary that changes over generations.

A promising direction for future research is to generalize the concept and apply it to other types of tiles. The self-similar Penrose tiling presented in the article Fractal Penrose Tilings I: Construction and Matching Rules \cite{fractalpenrose} could be useful in studying the direction proposed in Section \ref{sec:similar research}. It is expected that the similarity dimension of a tiling depends on the similarity dimension of its boundary.

\bibliographystyle{unsrt}
\bibliography{Test} 

\end{document}